\documentclass[twoside,12pt]{article}
\setlength{\textwidth}{16.5cm} \setlength{\textheight}{21.5cm}

\usepackage[all, 2cell, dvips]{xy}
\usepackage{latexsym,amsmath, amsfonts, graphics, epsf, epic}
\usepackage{amsthm}
\usepackage{amssymb}
\usepackage{amsopn}
\usepackage{amscd}
\usepackage{color}

\topmargin -1cm \oddsidemargin -.2cm \evensidemargin -.2cm
\pagestyle{empty}

\theoremstyle{plain}
\newtheorem{thm}{Theorem}[section]

\newtheorem{lem}[thm]{Lemma}
\newtheorem{prop}[thm]{Proposition}

\theoremstyle{definition}

\newtheorem{remark}[thm]{Remark}
\newtheorem{example}[thm]{Example}
\newtheorem{defn}[thm]{Definition}


\parindent=0pt 
\usepackage{amssymb}

\def\lm{\lambda}

\def\al{\alpha}

\def\l.l.o.{\it l.l.o}


\def\chiup{\raise 2pt\hbox{$\chi$}}



\pagestyle{myheadings}

\title{Identities for the number of standard Young
tableaux in some $(k,\ell)$ hooks}

\begin{document}
\maketitle
\centerline{A. Regev}

\medskip {\bf Abstract}: Closed formulas are known for $S(k,0;n)$,
the number of standard Young tableaux of size $n$ and with at most
$k$ parts, where $1\le k\le 5$. Here we study the analogue problem
for $S(k,\ell;n)$, the number of standard Young tableaux of size
$n$ which are contained in the $(k,\ell)$ hook. We deduce some
formulas for the cases $k+\ell\le 4$.

\medskip
2010 Mathematics Subject Classification 05C30

\section{Introduction}
Given a partition $\lm$ of $n$,  $\lm\vdash n$, let $\chi^\lm$
denote the corresponding irreducible $S_n$ character. Its degree
is denoted by $\deg \chi^\lm=f^\lm$ and is equal to the number of
Standard Young tableaux (SYT) of shape
$\lm$~\cite{kerber},~\cite{macdonald},~\cite{sagan},~\cite{stanley}.
The number $f^\lm$ can be calculated for example by the hook
formula~\cite[Theorem 2.3.21]{kerber},~\cite[Section
3.10]{sagan},~\cite[Corollary 7.21.6]{stanley}. We consider the
number of  SYT in the $(k,\ell)$ hook. More precisely, given
integers $k,\ell,n\ge 0$ we denote
\[
H(k,\ell;n)=\{\lm=(\lm_1,\lm_2,\ldots)\mid \lm\vdash n~\mbox{and}~
\lm_{k+1}\le \ell\}\qquad\mbox{and}\qquad S(k,\ell;n)=\sum_{\lm\in
H(k,\ell;n)}f^\lm.
\]
\subsection{The cases where $S(k,\ell;n)$ are known}\label{s1}
For the "strip" sums $S(k,0;n)$ it is
known~\cite{regev1}~\cite{stanley} that
\[
S(2,0;n)={n\choose\lfloor\frac{n}{2}\rfloor}\quad\mbox{and}\quad
S(3,0;n)=\sum_{j\ge 0}\frac{1}{j+1}{n\choose 2j}{2j\choose j}.
\]
Let $C_j=\frac{1}{j+1}{2j\choose j}$ be the Catalan numbers, then
Gouyon-Beauchamps~\cite{gouyon}~\cite{stanley} proved that
\[
S(4,0;n)=C_{\lfloor\frac{n+1}{2}\rfloor}\cdot
C_{\lceil\frac{n+1}{2}\rceil}\quad\mbox{and}\quad
S(5,0;n)=6\sum_{j=0}^{\lfloor\frac{n}{2}\rfloor}{n\choose 2j}\cdot
C_j\cdot\frac{(2j+2)!}{(j+2)!(j+3)!}.
\]
\medskip
As for the "hook" sums, until recently only $S(1,1;n)$ and
$S(2,1;n)=S(1,2;n)$ have been calculated:

\medskip
1. ~It easily follows that $S(1,1;n)=2^{n-1}$.

\medskip
2. ~The following identity was proved in~\cite[Theorem
8.1]{regev2}:
\begin{eqnarray}\label{motzkin.path.3}
S(2,1;n)=~~~~~~~~~~~~~~~~~~~~~~~~~~~~~~~~~~~~~~~~~~~~~~~~~~~~~~~~~~~~~~~~~~~~~~~~~~~~~~~~~~~~~~~~~~~~
\end{eqnarray}
\[
~~~~~~=\frac{1}{4}\left(\sum_{r=0}^{n-1}{n-r\choose{\lfloor\frac{n-r}{2}\rfloor}}
{n\choose r}
+\sum_{k=1}^{\lfloor\frac{n}{2}\rfloor-1}\frac{n!}{k!\cdot
(k+1)!\cdot (n-2k-2)!\cdot (n-k-1)\cdot(n-k)}\right)+1.
\]

\medskip
\subsection{The main results}
In Section~\ref{s2} we prove Equation~\eqref{rewrite8}, which
gives (sort of) a closed formula for $S(3,1;n)$ in terms of the
Motzkin-sums function. For the Motzkin-sums function
see~\cite[sequence A005043]{sloane}.  Equation~\eqref{rewrite8} in
fact is a "degree" consequence of a formula of $S_n$ characters,
of interest on its own, see Equation~\eqref{rewrite3}.

\medskip
In Section~\ref{s3} we find some intriguing relations between the
sums $S(4,0;n)$ and the "rectangular" sub-sums $S^*(2,2,;n)$, see
below identities~\eqref{b3} and~\eqref{b4}.

\medskip

Finally, in Section~\ref{s4} we review some cases where the
hook-sums $S(k,\ell;n)$ are related, in some rather mysterious
ways, to humps calculations on Dyck and on Motzkin paths,
see~\eqref{eq1},~\eqref{eq2}, and Theorem~\ref{motzkin.humps.1}.

\medskip
As usual, in some of the above identities it is of interest to
find bijective proofs, which might explain these identities.

\medskip
{\bf Acknowledgement.} We thank D. Zeilberger for verifying some
of the identities here by the WZ method.

\section{The sums $S(3,1;n)$ and the characters $\chi
(3,1;n)$}\label{s2}

 Define the $S_n$ character
\begin{eqnarray}\label{rewrite5}
\chi(k,\ell;n)=\sum_{\lm\in H(k,\ell;n)}
\chi^\lm\qquad\mbox{so}\qquad\deg(\chi(k,\ell;n))=S(k,\ell;n).
\end{eqnarray}
\subsection{The Motzkin-sums function}

Define the $S_n$ character
\begin{eqnarray}\label{rewrite6}
\Psi(n)=\sum_{k=0}^{\lfloor
n/2\rfloor}\chi^{(k,k,1^{n-2k})}\qquad\mbox{and
denote}\qquad\deg\Psi(n)=a(n).
\end{eqnarray}
We call $\Psi(n)$ {\it the Motzkin-sums} character. Note that
\[
\deg\chi^{(k,k,1^{n-2k})}=f^{(k,k,1^{n-2k})}=\frac{n!}{(k-1)!\cdot
k!\cdot (n-2k)!\cdot (n-k)\cdot (n-k+1)},
\] hence
\begin{eqnarray}\label{a2}
a(n)=\sum_{k=1}^{\lfloor{n}/{2}\rfloor}\frac{n!}{(k-1)!\cdot
k!\cdot (n-2k)!\cdot (n-k)\cdot (n-k+1)}.
\end{eqnarray}
By~\cite[sequence A005043]{sloane} it follows that $a(n)$ is the
Motzkin-sums function. The reader is referred to~\cite{sloane} for
various properties of $a(n)$. For example, $a(n)+a(n+1)=M_n$,
where $M_n$ are the Motzkin numbers. Also $a(1)=0,~a(2)=1$ and
$a(n)$ satisfies the recurrence:
\begin{eqnarray} \mbox{for $n\ge 3$}\qquad \label{a1} a(n)=\frac{n-1}{n+1}\cdot(2\cdot
a(n-1)+3\cdot a(n-2)).
\end{eqnarray}

 Note also that for $n\ge 2~$ Equation~\eqref{motzkin.path.3} can be
written as
\begin{eqnarray}\label{a3}
S(2,1;n)
=\frac{1}{4}\left(\sum_{r=0}^{n-1}{n-r\choose{\lfloor\frac{n-r}{2}\rfloor}}
{n\choose r}+a(n)-1\right)+1.
\end{eqnarray}

The asymptotic behavior of $a(n)$ can be deduced from that of
$M_n$. We deduce it here, even though it is not needed in the
sequel.

\begin{remark} As $n$ goes to infinity,
\[
a(n)\simeq \frac{\sqrt 3}{8\cdot\sqrt{2\pi}}\cdot\frac{1}{n\sqrt
n}\cdot 3^n\qquad\mbox{and}\qquad a(n)\simeq\frac{1}{4} \cdot M_n.
\]
\begin{proof}
By standard techniques it can be shown that $a(n)$ has asymptotic
behavior $$a(n)\simeq c\cdot \left(\frac{1}{n}\right)^g\cdot
\al^n$$ for some constants $c,g$ and $\al$ -- which we now
determine. By~\cite{regev1}
\[
M_n\simeq
\frac{\sqrt 3}{2\sqrt{2\pi}}
\cdot\left(\frac{1}{n}\right)^{3/2}\cdot 3^n.
\]
With
\[
M_n=a(n)+a(n+1)\simeq c\cdot
(1+\al)\cdot\left(\frac{1}{n}\right)^g\cdot\al^n
\]
this implies that $\al=3$, that $g=3/2$ and that $c=\frac{\sqrt
3}{8\cdot\sqrt{2\pi}}$.

\end{proof}
\end{remark}

\subsection{The outer product of $S_m$ and  $S_n$ characters}

Given an $S_m$ character $\chi_m$ and  an $S_n$ character
$\chi_n$, we can form their {\it outer} product $\chi_n\hat\otimes
\chi_n$. The exact decomposition of $\chi_m\hat\otimes \chi_n$ is
given by the Littlewood-Richardson
rule~\cite{kerber},~\cite{macdonald},~\cite{sagan},~\cite{stanley}.
In the special case that $\chi_n=\chi^{(n)}$, this decomposition
is given, below, by Young's rule. Also
\begin{eqnarray}\label{rewrite7}
\deg (\chi_n\hat\otimes \chi^{(n)})=\deg (\chi_n)\cdot{n+m\choose
n}.
\end{eqnarray}

\medskip {\bf Young's Rule}~\cite{macdonald}: Let $\lm=(\lm_1,\lm_2,\ldots)\vdash m$
and denote by $\lm^{+n}$ the following set of partitions of $m+n$:
\[
\lm^{+n}=\{\mu\vdash n+m\mid \mu_1\ge \lm_1\ge \mu_2\ge
\lm_2\ge\cdots\}.
\]
Then
\[
\chi^\lm\hat\otimes \chi^{(n)}=\sum_{\mu\in\lm^{+n}}\chi^\mu.
\]
\begin{example}\label{rewrite4}~\cite{regev1},~\cite{stanley}
Given $n$, it follows that
\begin{eqnarray}\label{rewrite1}
\chi^{(\lfloor n/2\rfloor)}\hat\otimes \chi^{(\lceil
n/2\rceil)}=\chi(2,0;n),\quad\mbox{and by taking degrees,}\quad
S(2,0;n)={n\choose \lfloor n/2 \rfloor}.
\end{eqnarray}
\end{example}

\subsection{A character formula for $\chi(3,1;n)$}

\begin{prop}\label{rewrite2}
With the notations of~\eqref{rewrite5} and~\eqref{rewrite6},
\begin{eqnarray}\label{rewrite3}
\chi(3,1;n)=\frac{1}{2}\cdot\left[\chi(2,0,n)+\sum_{j=0}^n
\Psi(j)\hat\otimes \chi^{(n-j)} \right].
\end{eqnarray}
By taking degrees, Example~\ref{rewrite4} together
with~\eqref{rewrite6} and~\eqref{rewrite7} imply that
\begin{eqnarray}\label{rewrite8}
S(3,1;n)=\frac{1}{2}\cdot
\left[{n\choose\lfloor\frac{n}{2}\rfloor}+\sum_{j=0}^n
a(j)\cdot{n\choose j} \right].
\end{eqnarray}
\end{prop}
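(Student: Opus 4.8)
The plan is to prove the character identity \eqref{rewrite3} directly, since \eqref{rewrite8} then follows automatically by taking degrees, using $\deg\chi(2,0;n)=\binom{n}{\lfloor n/2\rfloor}$ from Example~\ref{rewrite4} and $\deg(\Psi(j)\hat\otimes\chi^{(n-j)})=a(j)\binom{n}{j}$ from \eqref{rewrite6} and \eqref{rewrite7}. So the real content is a statement about multiplicities of irreducibles $\chi^\lm$ on both sides. Fix $\lm\vdash n$. On the left, the multiplicity of $\chi^\lm$ in $\chi(3,1;n)$ is $1$ if $\lm$ lies in the $(3,1)$ hook (i.e.\ $\lm_4\le 1$) and $0$ otherwise. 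On the right, I must compute the multiplicity of $\chi^\lm$ in $\sum_{j=0}^n\Psi(j)\hat\otimes\chi^{(n-j)}$, add the multiplicity in $\chi(2,0;n)$ (which is $1$ if $\lm_3=0$, i.e.\ $\lm$ has at most two parts, and $0$ otherwise), and check the total equals twice the left-hand multiplicity.

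The key computation is: for a fixed $\lm$, in how many ways, and for which $j$, does $\chi^\lm$ appear in $\Psi(j)\hat\otimes\chi^{(n-j)}=\sum_{k=0}^{\lfloor j/2\rfloor}\chi^{(k,k,1^{j-2k})}\hat\otimes\chi^{(n-j)}$? By Young's rule, $\chi^\lm$ appears in $\chi^{(k,k,1^{j-2k})}\hat\otimes\chi^{(n-j)}$ exactly when $\lm$ is obtained from $(k,k,1^{j-2k})$ by a horizontal strip, i.e.\ when $\lm_1\ge k\ge\lm_2\ge k\ge \lm_3$ and $\lm_3\ge 1\ge\lm_4\ge 1\ge\cdots$; the latter chain forces $\lm_2\ge k\ge\lm_3$ to actually read $\lm_2\ge k$ and $k\ge\lm_3$, and forces $\lm_3\ge 1$, $\lm_4\le 1$, and $\lm_i=0$ for the relevant tail. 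So $\chi^\lm$ occurs (with multiplicity one for each valid pair $(k,j)$) precisely when $\lm$ lies in the $(3,1)$ hook and has at least three parts (so that the row $(k,k,1^{j-2k})$ with $j\ge 2k$ can sit inside), with $k$ ranging over $\lm_3\le k\le\lm_2$ — wait, one must be careful: the condition $\lm_3\ge 1$ and $\lm_4\le 1$ is exactly "$\lm$ in the $(3,1)$ hook with three or more parts". For such $\lm$, the number of admissible $j$ is controlled by $j-2k=\lm_4+\lm_5+\cdots$ being the number of trailing $1$'s, but since $\Psi(j)$ already sums over $k$, I should instead fix $\lm$ and count pairs: $k$ must satisfy $\lm_2\ge k\ge\lm_3$ is wrong too — let me restate. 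Given $\lm$ with $\lm_3\ge1,\lm_4\le1$, the partition $(k,k,1^{j-2k})$ embeds in $\lm$ via a horizontal strip iff $\lm_1\ge k$, $\lm_2\ge k$, $k\ge\lm_3$ (from $\mu_3\ge\lm_3$ with $\mu_3$... no). I would carefully run Young's rule in the form $\lm\in(k,k,1^{j-2k})^{+(n-j)}$, i.e.\ $\lm_1\ge k\ge\lm_2\ge k\ge\lm_3\ge 1\ge\lm_4\ge 1\ge\cdots$, which collapses to $\lm_2\ge k$, $k\ge\lm_3$... but $k\ge\lm_2$ also, so $k=\lm_2$; hmm, that over-constrains. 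The resolution is that $(k,k,1^{j-2k})$ has its \emph{first} part in row $1$, so the chain is $\lm_1\ge k$, then $k\ge\lm_2$? No—$\mu=\lm$, $\lm^{(2)}=(k,k,1^{\dots})$: the rule gives $\lm_1\ge k\ge\lm_2\ge k\ge\lm_3\ge 1\ge\lm_4\ge1\ge\cdots$. Both $\lm_1\ge k$ and $k\ge\lm_2$ and $\lm_2\ge k$ force $k=\lm_2=\lm_1$??—no: $\lm_1\ge k$ and $\lm_2\ge k\ge\lm_3$. The entries of the embedded partition are $k,k,1,1,\ldots,1$; interleaving with $\lm_1\ge\lm_2\ge\lm_3\ge\cdots$ gives $\lm_1\ge k$, $k\ge\lm_2$?? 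I will just carefully write: $\chi^\lm$ appears iff $\lm_1\ge k$, $\lm_2\ge k\ge\lm_3$, $\lm_3\ge 1$, $\lm_4\le 1$ — this is the honest output of Young's rule applied with the embedded shape's parts $(k,k,1,\ldots,1)$ and it does \emph{not} force $k=\lm_2$ because the correct interleaving is $\lm_i\ge(\text{embedded})_i\ge\lm_{i+1}$ only when the embedded shape is the \emph{smaller} one; here $(k,k,1^{j-2k})\subseteq\lm$ so $\lm_i\ge k$ for $i=1,2$ and $\lm_i\ge 1$ for $i\le j-2k+2$, plus the horizontal-strip condition $(k,k,1^{j-2k})_i\ge\lm_{i+1}$, giving $k\ge\lm_2$? — again I land on $k\ge\lm_2$.

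I think the cleanest route, and the one I would actually carry out, is to go the \emph{other} direction: use the dual/Pieri form to count, for fixed $\lm$ in the $(3,1)$ hook, over all $j$ and all $k\le j/2$, the number of semistandard-type fillings, and recognize the answer combinatorially; equivalently, observe that the right-hand side, viewed as a virtual character, has the property that its multiplicities are constant ($=1$) on the $(3,1)$ hook minus the $(2,0)$ strip after the $\tfrac12$, and the $\chi(2,0;n)$ term plus the diagonal contributions from $\Psi$ exactly double-count the two-row partitions. So the structure of the proof is: (i) expand the RHS using \eqref{rewrite6} and Young's rule; (ii) for $\lm$ with at most two rows, show the total multiplicity is $2$ (contributions: $1$ from $\chi(2,0;n)$, and $1$ from the $k=\lm_2$, $j=\lm_1+\lm_2$ term of $\Psi(j)\hat\otimes\chi^{(n-j)}$, with all other terms vanishing); (iii) for $\lm$ with exactly three rows or with four rows and $\lm_4=1$ — i.e.\ genuinely in the hook but not a two-row shape — show the total multiplicity is also $2$, now coming entirely from the $\Psi$-sum, by checking that exactly two pairs $(k,j)$ (or one pair counted with the right multiplicity) work; and (iv) for $\lm$ with $\lm_4\ge 2$ (outside the hook), show every term vanishes. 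Then divide by $2$.

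The main obstacle is step (iii): pinning down exactly which pairs $(j,k)$ make $\chi^\lm$ occur in $\chi^{(k,k,1^{j-2k})}\hat\otimes\chi^{(n-j)}$ and verifying the count is always $2$ (with the right bookkeeping when $\lm_3=\lm_2$ or $\lm_4=0$, where two candidate terms may coincide and multiplicity must be tracked carefully). I expect the bookkeeping to hinge on the identity, valid for $\lm$ in the $(3,1)$ hook, that the set $\{(j,k): \lm\in (k,k,1^{j-2k})^{+(n-j)}\}$ has exactly two elements — roughly the shape $(\lm_2,\lm_2,1^{\lm_3-1+\lm_4})$ with the right $j$, and the shape obtained by shifting one box — and this combinatorial lemma, once isolated, makes the rest routine. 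An alternative that sidesteps the case analysis is an induction on $n$ using the branching rule $\chi^\lm|_{S_{n-1}}$ together with the recursion $\Psi(n)=\Psi(n-2)\hat\otimes(\text{stuff})+\cdots$ implicit in \eqref{a1}, but I would try the direct multiplicity count first since it is more transparent.
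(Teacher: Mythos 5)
Your overall strategy coincides with the paper's: prove the character identity \eqref{rewrite3} by computing, for each irreducible $\chi^\mu$, its multiplicity in $\Omega(n)=\sum_{j=0}^n\Psi(j)\hat\otimes\chi^{(n-j)}$ via Young's rule, and then obtain \eqref{rewrite8} by taking degrees (that reduction step of yours is correct and is exactly what the paper does). However, the core of the argument --- that the multiplicity of $\chi^\mu$ in $\Omega(n)$ equals $2$ for $\mu\in H(3,1;n)$ with at least three rows, equals $1$ for $\mu$ with at most two rows, and equals $0$ for $\mu\notin H(3,1;n)$ --- is never actually established in your write-up. You state it as an expectation in steps (ii)--(iv), explicitly flag step (iii) as ``the main obstacle,'' and the one concrete computation you attempt repeatedly reverses the direction of the interlacing conditions and ends in self-contradiction rather than in a count. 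Since this multiplicity count \emph{is} the proposition (it is precisely the paper's Equation~\eqref{p3}), the proposal as written has a genuine gap.

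The frustrating part is that the first chain you wrote down is the correct one, and the feature you rejected as ``over-constraining'' is exactly the key fact. With $(k,k,1^{j-2k})$ as the smaller shape in Young's rule as stated in the paper, $\chi^\mu$ occurs in $\chi^{(k,k,1^{j-2k})}\hat\otimes\chi^{(n-j)}$ if and only if $\mu_1\ge k\ge\mu_2\ge k\ge\mu_3\ge 1\ge\mu_4\ge 1\ge\cdots\ge 1\ge\mu_{j-2k+3}\ge 0$. This forces $\mu_2=k$, which does not over-constrain anything: it uniquely determines $k$ from $\mu$, and is what makes the final count so small. It further forces $1\le\mu_3\le k$ when $j>2k$, and $\mu_4=\cdots=\mu_{j-2k+2}=1$ with $\mu_{j-2k+3}\in\{0,1\}$. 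Consequently, for $\mu=(\mu_1,\mu_2,\mu_3,1^s)$ with $\mu_3\ge1$, exactly the two shapes $(\mu_2,\mu_2,1^{s})$ and $(\mu_2,\mu_2,1^{s+1})$ (i.e.\ exactly two values of $j$, namely $j=2\mu_2+s$ and $j=2\mu_2+s+1$) contribute, each with multiplicity one, giving total multiplicity $2$; for $\mu$ with at most two rows only the single shape $(\mu_2,\mu_2)$ contributes (total multiplicity $1$, which the added $\chi(2,0;n)$ raises to $2$); and nothing contributes when $\mu_4\ge2$. Your conjectured pair ``$(\lambda_2,\lambda_2,1^{\lambda_3-1+\lambda_4})$ and a shifted box'' is not the correct parametrization. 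One further point worth a sentence in any complete write-up: the boundary conventions (the $k=0$ terms of $\Psi(j)$, and $\Psi(0)$, $\Psi(1)$) must be fixed consistently with $a(0)=1$, $a(1)=0$ so that the one-row and small-$n$ cases come out right; neither your proposal nor the paper addresses this explicitly.
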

\begin{proof}
Denote
\[
\Omega(n)=\sum_{j=0}^n \Psi(j)\hat\otimes \chi^{(n-j)}
\]
and analyze this $S_n$ character. Young's rule implies the
following:

\medskip

Let $\mu\vdash n$, then by Young's rule $\chi^\mu$ has a positive
coefficient  in $\Omega(n)$ if and only if $\mu\in H(3,1;n)$.
Moreover, all these coefficients are either $1$ or $2$, and such a
coefficient equals $1$ if and only if $\mu$ is a $\le 2$ two rows
partition $\mu=(\mu_1,\mu_2)$. It follows that
\begin{eqnarray}\label{p3}
\chi(2,0;n)+\Omega(n)=2\cdot\sum_{\lm\in H(3,1;n)}\chi^\lm.
\end{eqnarray}
This implies~\eqref{rewrite3} and completes the proof of
Proposition~\ref{rewrite2}.
\end{proof}

\section{The sums $S(4,0;n)$ and $S^*(2,2;n)$}\label{s3}
\begin{defn}
\begin{enumerate}
\item
Let $n=2m$, $m\ge 2$ and let $H^*(2,2;2m)\subset H(2,2;2m)$ denote
the set of partitions $H^*(2,2;2m)=\{(k+2,k+2,2^{m-2-k})\vdash
2m\mid k=0,\ldots m-2\}$ (the partitions in the $(2,2)$ hook with
both arm and leg being rectangular), then denote
\[
S^*(2,2;2m)=\sum _{\lm\in H^*(2,2;2m)} f^\lm.
\]
\item
Let $n=2m+1$, $m\ge 2$ and let $H^*(2,2;2m+1)\subset H(2,2;2m+1)$
denote the set of partitions
$H^*(2,2;2m+1)=\{(k+3,k+2,2^{m-2-k})\vdash 2m+1\mid k=0,\ldots
m-2\}$ (the partitions in the $(2,2)$ hook with arm nearly
rectangular and leg rectangular), then denote
\[
S^*(2,2;2m+1)=\sum _{\lm\in H^*(2,2;2m+1)} f^\lm.
\]
\end{enumerate}
\end{defn}

Recall from Section~\ref{s1} that $S(4,0;2m-1)=C_m^2$ and
$S(4,0;2m)=C_m\cdot C_{m+1}$. We have the following intriguing
identities.
\begin{prop}\label{b1}
\begin{enumerate}
\item
Let $n=2m$ then \[S(4,0;2m-2)=C_{m-1}\cdot C_{m}=S^*(2,2;2m).\]
Explicitly, we have the following identity:
\begin{eqnarray}\label{b3}
C_{m-1}\cdot C_m=\frac{1}{m\cdot (m+1)}\cdot{2m-2\choose
m-1}\cdot{2m\choose
m}=~~~~~~~~~~~~~~~~~~~~~~~~~~~~~~~~~~~~~~\end{eqnarray}
\[\label{b2}~~~~~~~~~~=\sum_{k=0}^{m-2}\frac{(2m)!}{k!\cdot
(k+1)!\cdot(m-k-2)!\cdot (m-k-1)! \cdot (m-1)\cdot m^2\cdot
(m+1)}.
\]

\item
Let $n=2m+1$ then \[\frac{2m+1}{m+2}\cdot
S(4,0;2m-1)=\frac{2m+1}{m+2}\cdot C_{m}^2=S^*(2,2;2m+1).\]
 Explicitly, we have
the following identity:
\begin{eqnarray}\label{b4}
\frac{2m+1}{ m+2}\cdot C_{m}^2=
 \frac{1}{(m+1)\cdot(m+2)}\cdot{2m\choose
m}{2m+1\choose m} =~~~~~~~~~~~~~~~~~~~~~~~~~~~~~~~~~~~~~~~~~
\end{eqnarray}
\[=\sum_{k=0}^{m-2}\frac{(2m+1)!\cdot 2}{k!\cdot
(k+2)!\cdot(m-k-2)!\cdot (m-k-1)! \cdot (m-1)\cdot m\cdot
(m+1)\cdot (m+2)}. \]
\end{enumerate}
\end{prop}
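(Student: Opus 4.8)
The plan is to prove the two identities in Proposition~\ref{b1} by computing the hook-length-formula degrees $f^\lm$ for the rectangular-ish shapes in $H^*(2,2;n)$, summing the resulting hypergeometric series in closed form, and then recognizing the answer as the stated product of binomial coefficients (equivalently, of Catalan numbers). There are two components: (a) verify the elementary closed-form evaluations such as $C_{m-1}C_m=\frac{1}{m(m+1)}\binom{2m-2}{m-1}\binom{2m}{m}$, which is immediate from $C_j=\frac{1}{j+1}\binom{2j}{j}$ and the formulas $S(4,0;2m-1)=C_m^2$, $S(4,0;2m)=C_mC_{m+1}$ recalled from Section~\ref{s1}; and (b) the genuine content, namely the two summation identities.

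\medskip
For part~(1), I would first apply the hook-length formula to $\lm=(k+2,k+2,2^{m-2-k})\vdash 2m$. A direct (if tedious) bookkeeping of the hook lengths of this shape — it is a $(2,2)$-hook partition whose first two columns have height $m-k$ and whose first two rows have length $k+2$, with a $(m-2-k)\times 2$ rectangle hanging below a $2\times(k+2)$ rectangle — gives
\[
f^{(k+2,k+2,2^{m-2-k})}=\frac{(2m)!}{k!\,(k+1)!\,(m-k-2)!\,(m-k-1)!\,(m-1)\,m^2\,(m+1)},
\]
which is exactly the summand in~\eqref{b3}. Then set $t(k)$ equal to this summand and show $\sum_{k=0}^{m-2}t(k)=\frac{1}{m(m+1)}\binom{2m-2}{m-1}\binom{2m}{m}$. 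The cleanest route is telescoping: exhibit an antidifference $T(k)$ with $t(k)=T(k+1)-T(k)$ (a Gosper/WZ certificate — the paper already credits Zeilberger for WZ verification of such identities), so the sum collapses to $T(m-1)-T(0)$, and check the boundary values give the claimed product. Alternatively, rewrite $t(k)$ as a multiple of $\frac{1}{k+1}\binom{m-2}{k}\binom{m}{k+1}$-type terms and invoke a Vandermonde/Saalschütz evaluation of the resulting ${}_3F_2$ at $1$.

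\medskip
Part~(2) is handled in exactly the same way for $\lm=(k+3,k+2,2^{m-2-k})\vdash 2m+1$: the hook-length formula produces
\[
f^{(k+3,k+2,2^{m-2-k})}=\frac{2\,(2m+1)!}{k!\,(k+2)!\,(m-k-2)!\,(m-k-1)!\,(m-1)\,m\,(m+1)\,(m+2)},
\]
the summand in~\eqref{b4}; one then sums over $k=0,\dots,m-2$ by the same telescoping/hypergeometric argument and identifies the total with $\frac{1}{(m+1)(m+2)}\binom{2m}{m}\binom{2m+1}{m}=\frac{2m+1}{m+2}C_m^2$. The elementary identity $\frac{1}{(m+1)(m+2)}\binom{2m}{m}\binom{2m+1}{m}=\frac{2m+1}{m+2}C_m^2$ is a one-line manipulation using $\binom{2m+1}{m}=\frac{2m+1}{m+1}\binom{2m}{m}$ and $C_m=\frac{1}{m+1}\binom{2m}{m}$.

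\medskip
The main obstacle is step~(b): finding the right antidifference $T(k)$ (or, equivalently, casting each sum as a terminating hypergeometric series that is Saalschützian so that a standard closed form applies). Once the summand is massaged into a manifestly hypergeometric shape the term ratio $t(k+1)/t(k)$ is rational in $k$, so Gosper's algorithm is guaranteed to decide summability; the only real work is carrying out that reduction cleanly and pinning down the boundary terms. The hook-length computations themselves, while the longest routine part, are mechanical: each of the relevant shapes has an explicit block structure, so one can write down all hook lengths directly and cancel factorials. I would present the hook-length evaluations as a short lemma, then devote the bulk of the proof to the two telescoping identities, and finally note that the non-summation equalities in~\eqref{b3} and~\eqref{b4} are immediate rewritings of the Catalan-number products recalled from Section~\ref{s1}.
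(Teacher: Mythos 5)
Your proposal is correct and follows essentially the same route as the paper: the paper disposes of the summation identities by recognizing them as specializations of Gauss's ${}_2F_1(a,b;c;1)$ (with $a=2-m$, $b=1-m$, $c=2$ for~\eqref{b3}) or, alternatively, by the WZ method, which is exactly your Vandermonde-evaluation/Gosper-telescoping step. The only addition on your side is the explicit hook-length verification that the summands equal $f^{(k+2,k+2,2^{m-2-k})}$ and $f^{(k+3,k+2,2^{m-2-k})}$, which the paper leaves implicit in the definition of $S^*(2,2;n)$; your stated values check out on small cases.
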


\begin{proof}
Equation~\eqref{b3} is the specialization of Gauss's
$2F1(a,b;c;1)$ with $a=2-m,b=1-m, c=2$~\cite{askey},
and~\eqref{b4} is similar.
 Alternatively,
the identities~\eqref{b3} and~\eqref{b4} can be verified by the WZ
method~\cite{doron3},~\cite{doron2}.
\end{proof}

\section{Hook-sums and humps for pathes}\label{s4}

 A Dyck path of length
$2n$ is a lattice path, in $\mathbb{Z}\times \mathbb{Z}$, from
$(0,0)$ to $(2n,0)$, using up-steps $(1,1)$ and down-steps
$(1,-1)$ and never going below the $x$-axis.  A {\it hump} in a
Dyck path is an up-step followed by  a down-step.

\medskip
A  Motzkin path of length $n$ is a lattice path from $(0,0)$ to
$(n,0)$, using flat-steps $(1,0)$, up-steps $(1,1)$ and down-steps
$(1,-1)$, and never going below the $x$-axis. A hump in a Motzkin
path is an up-step followed by zero or more flat-steps followed by
a down-step.

\medskip
We count now {\it humps} for Dyck and for Motzkin paths and
observe the following intriguing phenomena: The humps-calculations
in the Dyck case correspond the $2\times n$ rectangular shape
$\lm=(n,n)$ to the $(1,1)$ hook shape $\mu=(n,1^n)$. And in the
Motzkin case we show below that it
corresponds the $(3,0)$ strip shape partitions $H(3,0;n)$ to the
$(2,1)$ hook shape partitions $H(2,1;n)$.

\subsection{The Dyck case}

The Catalan number
\[C_n=\frac{(2n)!}{n!(n+1)!}\]
is the cardinality of a variety of sets~\cite{stanley}; here we
are interested in two such sets. First, $C_n=f^{(n,n)}$, the
number of SYT of shape $(n,n)$. Second, $C_n$ is the number of
Dyck paths of length $2n$.
 Let ${\cal H} D_n$ denote
the total number of humps in all the Dyck paths of length $2n$,
then \[ {\cal H} D_n={2n-1\choose n},\]
see~\cite{dershowitz1},~\cite{dershowitz2},~\cite{deutsch}. Since
${2n-1\choose n}=f^{(n,1^n)}$, we have
\[
C_n=f^{(n,n)}\qquad\mbox{and}\qquad{\cal H} D_n=f^{(n,1^n)},
\]
which we denote by
\begin{eqnarray}\label{eq1}
{\cal H}: (n,n)\longrightarrow (n,1^n).
\end{eqnarray}

\subsection{The Motzkin case}

Like the Catalan numbers, also the Motzkin numbers $M_n$ are the
cardinality of a variety of sets; for example
$M_n=S(3,0;n)$,~\cite{regev1},~\cite{stanley},~\cite [sequence
A001006]{sloane}, which gives the Motzkin numbers a  SYT
interpretation. Also, $M_n$ is the number of Motzkin paths of
length $n$. Let ${\cal H} M_n$ denote the total number of humps in
all the Motzkin paths of length $n$, then by~\cite [sequence
A097861]{sloane}
\begin{eqnarray}\label{motzkin.path.2}
{\cal H}M_n=\frac{1}{2}\sum_{j\ge 1}{n\choose j}{n-j\choose j}.
\end{eqnarray}
We show below that this implies the intriguing identity ${\cal
H}M_n=S(2,1;n)-1,$ which gives a SYT-interpretation of the numbers
${\cal H}M_n$. Thus the humps-calculations in the Motzkin case
corresponds the $(3,0)$ strip shape partitions $H(3,0;n)$ to the
$(2,1)$ hook shape partitions $H(2,1;n)$. We denote this by
\begin{eqnarray}\label{eq2}
{\cal H}: H(3,0;n)\longrightarrow H(2,1;n).
\end{eqnarray}

\begin{thm}\label{motzkin.humps.1}
The number of humps for the Motzkin paths of length $n$ satisfies
\[
{\cal H}M_n=S(2,1;n)-1.
\]
\end{thm}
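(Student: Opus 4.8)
The plan is to prove the identity by comparing two explicit closed formulas for the quantities on either side. On the right we have, from Equation~\eqref{a3}, that for $n\ge 2$
\[
S(2,1;n)-1=\frac{1}{4}\left(\sum_{r=0}^{n-1}{n-r\choose\lfloor\frac{n-r}{2}\rfloor}{n\choose r}+a(n)-1\right),
\]
so it suffices to show ${\cal H}M_n$ equals this expression, i.e.\ that
\[
4\cdot {\cal H}M_n=\sum_{r=0}^{n-1}{n-r\choose\lfloor\frac{n-r}{2}\rfloor}{n\choose r}+a(n)-1 .
\]
On the left we have the formula~\eqref{motzkin.path.2}, ${\cal H}M_n=\tfrac12\sum_{j\ge 1}\binom{n}{j}\binom{n-j}{j}$. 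So the whole theorem reduces to a binomial-coefficient identity with a Motzkin-sums correction term; I would first verify it for small $n$ and then prove it either by generating functions or by the WZ method (which the acknowledgement already indicates has been done for several identities in the paper).

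The first key step is to handle the sum $\sum_{r=0}^{n-1}\binom{n-r}{\lfloor (n-r)/2\rfloor}\binom{n}{r}$. Writing $m=n-r$ and using that $\binom{m}{\lfloor m/2\rfloor}$ is the $m$-th central binomial coefficient $S(2,0;m)$, this is $\sum_{m=1}^{n}S(2,0;m)\binom{n}{m}$, a binomial transform of the central binomial coefficients. The generating function $\sum_m \binom{m}{\lfloor m/2\rfloor}x^m=\frac{1}{\sqrt{1-4x^2}}\cdot\frac{1}{1-2x}$ (or the standard split into even/odd $m$) lets me write this sum in closed form; alternatively one recognizes it combinatorially. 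The second key step is to identify the generating function of $a(n)$: from~\cite{sloane}, the Motzkin-sums function has ordinary generating function $A(x)=\frac{1-x-\sqrt{1-2x-3x^2}}{2x(1+x)}$. The third step is to expand ${\cal H}M_n$ via~\eqref{motzkin.path.2}: the generating function $\sum_n\Bigl(\sum_j\binom{n}{j}\binom{n-j}{j}\Bigr)x^n$ is a known hypergeometric/algebraic series (the diagonal-type generating function associated with A097861), and in fact it is classically $\frac{1}{\sqrt{1-2x-3x^2}}$ up to elementary factors. Assembling these three generating functions and checking that $4\cdot(\text{left})=(\text{first sum})+A\text{-part}-\frac{1}{1-x}$ as formal power series finishes the proof.

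The main obstacle is the bookkeeping around the floor function $\lfloor (n-r)/2\rfloor$ and the parity split it forces: the sum over $r$ naturally breaks into the cases $n-r$ even and $n-r$ odd, each contributing a different closed form, and one must recombine them cleanly before matching against the Motzkin-type series. A secondary subtlety is that the formula~\eqref{a3} for $S(2,1;n)$ is only valid for $n\ge 2$, so the identity should be stated and checked for $n\ge 2$ (the cases $n=0,1$ verified directly: ${\cal H}M_0={\cal H}M_1=0$ and $S(2,1;0)=S(2,1;1)=1$). Once the three generating functions are in hand the verification is a routine but slightly delicate manipulation of algebraic power series over $\mathbb{Q}(x)[\sqrt{1-2x-3x^2}\,]$; I would either carry this out directly or simply invoke the WZ method to certify the resulting single-sum binomial identity, as is done elsewhere in the paper.
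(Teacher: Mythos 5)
Your proposal follows essentially the same route as the paper: it combines Equation~\eqref{a3} (equivalently~\eqref{motzkin.path.3}) with~\eqref{motzkin.path.2} to reduce the theorem to exactly the binomial identity~\eqref{motzkin.path.222} of Lemma~\ref{motzkin.humps.11}, and then certifies that identity by the WZ method, which is precisely what the paper does. Your additional generating-function sketch is a plausible alternative for proving the lemma but is not carried out in detail, so the WZ certification remains the operative step, just as in the paper.
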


\begin{proof}
Combining Equations~\eqref{motzkin.path.3}
and~\eqref{motzkin.path.2}, the proof of
Theorem~\ref{motzkin.humps.1} will follow once  the following
binomial identity -- of interest on its own -- is proved.
\begin{lem}\label{motzkin.humps.11}
For $n\ge 2$
\begin{eqnarray}\label{motzkin.path.222}
2\sum_{j=1}^{\lfloor n/2\rfloor}{n\choose j}{n-j\choose
j}=\sum_{r=0}^{n-1}{n-r\choose{\lfloor\frac{n-r}{2}\rfloor}}
{n\choose r}+a(n)-1=~~~~~~~~~~~~~~~~~~~~~~~~~~~~~~~~~~~~~~
\end{eqnarray}
\[
~~~~~~~~~~~~=\sum_{r=0}^{n-1}{n-r\choose{\lfloor\frac{n-r}{2}\rfloor}}
{n\choose r}
+\sum_{k=1}^{\lfloor\frac{n}{2}\rfloor-1}\frac{n!}{k!\cdot
(k+1)!\cdot (n-2k-2)!\cdot (n-k-1)\cdot(n-k)}.
\]
\end{lem}

Equation~\eqref{motzkin.path.222} was verified by the WZ method.
About this method, see~\cite{doron3},~\cite{doron2}. We remark
that it would be interesting to find an elementary proof of this
identity.

\medskip
This completes the proof of Theorem~\ref{motzkin.humps.1}.
\end{proof}

A. Regev, Math. Dept. The Weizmann Institute, Rehovot 76100,
Israel.

{\it Email address:} amitai.regev at weizmann.ac.il

\end{document}